\newcommand{\Ehr}[2]{\mathrm{Ehr}_{#1}(#2)}
\newcommand{\cP}{\mathcal{P}}
\newcommand{\Vol}{\operatorname{Vol}}
\newcommand{\Pyr}[1]{\operatorname{Pyr}(#1)}
\newcommand{\Z}{\mathbb{Z}}
\newcommand{\R}{\mathbb{R}}
\newcommand{\conv}{\mathrm{conv}}
\renewcommand{\phi}{\varphi}
\def\v{{\boldsymbol v}}
\def\x{{\boldsymbol x}}
\newcommand\commentout[1]{}
\newtheorem{theorem}{Theorem}
\newtheorem{proposition}[theorem]{Proposition}
\theoremstyle{remark}
\newtheorem{remark}[theorem]{Remark}
\theoremstyle{definition}
\begin{document}
\title[A self dual reflexive simplex]{self dual reflexive simplices with Eulerian polynomials}

\author{Takayuki Hibi}
\address{(Takayuki Hibi) Department of Pure and Applied Mathematics,\\ Graduate School of Information Science and Technology\\ Osaka University, Suita, Osaka 565-0871,Japan}
\email{hibi@math.sci.osaka-u.ac.jp}

\author{McCabe Olsen}
\address{(McCabe Olsen) Department of Mathematics\\
         University of Kentucky\\
         Lexington, KY 40506--0027}
\email{mccabe.olsen@uky.edu}

\author{Akiyoshi Tsuchiya}
\address{(Akiyoshi Tsuchiya) Department of Pure and Applied Mathematics,\\ Graduate School of Information Science and Technology\\ Osaka University, Suita, Osaka 565-0871,Japan}
\email{a-tsuchiya@cr.math.sci.osaka-u.ac.jp}

\thanks{ The authors would like to thank anonymous referees for reading the manuscript carefully and providing helpful comments and suggestions.
The second author was partially supported by a 2016 National Science Foundation/Japanese Society for the Promotion of Science East Asia and Pacific Summer Institutes Fellowship NSF OEIS--1613525.
The third author was partially supported by Grant-in-Aid for JSPS Fellows 16J01549.}

\keywords{reflexive polytope, $\delta$-polynomial, Eulerian polynomial}

\subjclass[2010]{13P20, 52B20}


\begin{abstract}
A lattice polytope $\cP$ is called reflexive if its dual $\cP^\vee$ is a lattice polytope. The property that $\cP$ is unimodularly equivalent to $\cP^\vee$ does not hold in general, and in fact there are few examples of such polytopes. In this note, we introduce a new reflexive simplex $Q_n$ which has this property. Additionally, we show that $\delta$-polynomalial of $Q_n$ is the Eulerian polynomial and show the existence of a regular, flag, unimodular triangulation. 
\end{abstract}

\maketitle

Let $\cP\subset\R^d$ be a $d$-dimensional lattice polytope,
that is, a convex polytope all of whose vertices belong to $\Z^d$.
Let $\Vol(\cP)$ denote the \emph{normalized volume} of $\cP$, which is $d!$ times the Euclidean volume (Lebesgue measure) of $\cP$. For $k\in \Z_{>0}$, the \emph{lattice point enumerator} $i(\cP,k)$ counts the number of lattice points in $k\cP=\{k\alpha:\alpha\in \cP\}$, the  $k$th dilation of $\cP$. That is,
	\[
	i(\cP,k)=\#(k\cP\cap \Z^d), \ \ k\in\Z_{>0}.
	\]
Provided that $\cP$ is a lattice polytope, it is known  that $i(\cP,k)$ is a polynomial in the variable $k$ of degree $d$ (\cite{Ehrhart}).  
The \emph{Ehrhart Series} for $\cP$, $\Ehr{\cP}{z}$, is the rational generating function 
	\[
	\Ehr{\cP}z= 1+\sum_{k\geq 1}i(\cP,k)z^k=\frac{\delta(\cP,z)}{(1-z)^{d+1}}
	\]
where $\delta(\cP,z)=1+\delta_1z+\delta_2z^2+\cdots+\delta_dz^d$ is the \emph{$\delta$-polynomial} of $\cP$ (cf. \cite[Chapter 9]{ACCP}). The $\delta$-polynomial is endowed with the following properties:
	\begin{itemize}
	\item $\delta_0=1$, $\delta_1=i(\cP,1)-(d+1)$, and $\delta_d=\#(\cP\setminus \partial\cP\cap \Z^d)$;
	\item $\delta_i\geq 0$ for all $0\leq i\leq d$ (\cite{StanleyNonNeg});
	\item If $\delta_d\neq 0$, then $\delta_1\leq \delta_i$ for each $0\leq i\leq d-1$ (\cite{HibiBounds}). 
	\end{itemize}
For proofs of the first three properties of the coefficients, the reader should consult \cite[Chapter 9]{ACCP} or \cite[Chapter 3]{BeckRobins}.
The Ehrhart series and $\delta$-polynomials for polytopes have been studied extensively. For a detailed background on these topics, please refer to \cite{BeckRobins,Ehrhart,ACCP,EC1}.

Given two polytopes $\cP_1$ and $\cP_2$ in $\R^d$, we say that $\cP_1$ and $\cP_2$ are \emph{unimodularly equivalent} if there exists a unimodular matrix $U\in\Z^{d\times d}$ (i.e. $\det(U)=\pm 1$) and an integral vector $\v \in \Z^d$, such that $\cP_2=f_U(\cP_1)+\v$, where $f_U$ is the linear transformation defined by $U$, i.e., $f_U({\bf v}) = {\bf v} U$ for all ${\bf v} \in \R^d$. . We write $\cP_1\cong \cP_2$ in the case of unimodular equivalence. It is clear that if $\cP_1\cong\cP_2$, then $\delta(\cP_1,z)=\delta(\cP_2,z)$.

We say that a lattice polytope $\cP$ is \emph{reflexive} if the origin is the unique interior lattice point of $\cP$ and its dual polytope
	\[
	\cP^\vee=\left\{y\in\R^d \ : \langle x,y \rangle\leq 1 \mbox{ for all } x\in\cP \right\}
	\]
is a lattice polytope. 
Moreover, it follows from \cite{HibiDualPolytopes} that the following statements are equivalent:
	\begin{itemize}
	\item $\cP$ is unimodularly equivalent to some reflexive polytope;
	\item  $\delta(\cP,z)$ is of degree $d$ and is symmetric, that is $\delta_i=\delta_{d-i}$ for $0\leq i \leq \lfloor \frac{d}{2} \rfloor$.
	\end{itemize}


A polytope $\cP$ is called \emph{self dual} if $\cP$ is unimodularly equivalent to its dual polytope $\cP^\vee$. 
This is an extremely rare property in reflexive polytopes, especially for reflexive simplices. There are two families known self dual reflexive simplices. 
The first such family is given in \cite{BNill} and the  second family is given in \cite{TsuchiyaDeltaVector}. 
A construction for self dual reflexive polytopes is given in \cite{TsuchiyaDeltaVector}, though these polytopes are not simplicial and hence not simplices.   
In this paper, we provide a new family of self dual reflexive simplices $Q_n$ with small volume.


We now define a family of reflexive simplices which are self dual. 
For $n \geq 2$, let $Q_n$ denote the $n-1$ dimensional simplex with $\mathcal{V}$-representation 

	\[
	Q_n:=\conv \begin{bmatrix}
	1 & 1-n & 0 & 0 & \cdots & 0\\
	1 & 1 & 2-n & 0 & \cdots & 0\\
	1 & 1 & 1 & 3-n & \cdots & 0\\
	\vdots & \vdots & \vdots & & \ddots & \vdots\\
	1 & 1 & 1 & \cdots & 1 & -1\\
	\end{bmatrix}
	\]
	where we use the convention that the $x_{n-1}$ coordinate is given by the first row and the $x_1$ is given by the last row. 
We will adopt for simplicity the notation $C_{i}$ where $i\in\{0,\cdots,n-1\}$ for each column (vertex) such that $Q_n=\conv[C_{0} \ C_{1} \ \cdots \ C_{n-1}]$.

We have the following theorem.
\begin{theorem}\label{selfdual}
For $n\geq 2$, we have $Q_n\cong Q_n^\vee$. 
\end{theorem}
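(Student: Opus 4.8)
The plan is to prove something stronger than mere unimodular equivalence, namely that $Q_n^\vee = -Q_n$; then the central reflection $U = -I$, which is unimodular because $\det(-I) = (-1)^{n-1} = \pm 1$, provides the desired equivalence $Q_n \cong Q_n^\vee$ (with translation vector $\v = \mathbf{0}$). Since $Q_n$ is an $(n-1)$-simplex containing the origin in its interior, its dual is again an $(n-1)$-simplex, and the vertex of $Q_n^\vee$ associated to the facet of $Q_n$ opposite $C_i$ is the unique point $a_i$ determined by the conditions $\langle a_i, C_j\rangle = 1$ for all $j \neq i$. Thus the theorem reduces to identifying these $n$ vertices $a_0, \dots, a_{n-1}$.

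The key step is the Gram-type identity
\[
\langle C_i, C_j\rangle = -1 \qquad \text{for all } i \neq j.
\]
Granting it, I claim that $a_i = -C_i$ for every $i$: indeed $\langle -C_i, C_j\rangle = 1$ whenever $j \neq i$, and moreover $\langle -C_i, C_i\rangle = -\langle C_i, C_i\rangle < 1$ since each $C_i$ is a nonzero integer vector. Hence the vertices of $Q_n^\vee$ are exactly $-C_0, \dots, -C_{n-1}$, so that $Q_n^\vee = \conv\{-C_0, \dots, -C_{n-1}\} = -Q_n$, as desired.

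To prove the identity I will read the entries off the $\mathcal{V}$-representation: $C_0 = (1, \dots, 1)$, while for $i \geq 1$ the column $C_i$ has entry $0$ in coordinates $1, \dots, i-1$, entry $i - n$ in coordinate $i$, and entry $1$ in coordinates $i+1, \dots, n-1$. For $i = 0$ the pairing $\langle C_0, C_j\rangle$ is simply the sum of the entries of $C_j$, which equals $(j - n) + (n - 1 - j) = -1$. For $1 \leq i < j$ only the coordinates indexed by $j, j+1, \dots, n-1$ contribute, since $C_i$ vanishes below $i$ and $C_j$ vanishes below $j$: coordinate $j$ contributes $1 \cdot (j - n)$ and the remaining $n - 1 - j$ coordinates each contribute $1 \cdot 1$, again summing to $-1$. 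This is a short position-by-position bookkeeping computation.

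The one point deserving care, as opposed to the routine pairing computation, is justifying that $Q_n$ really is full-dimensional with the origin in its interior, for otherwise the passage from facet normals to dual vertices is not valid. I will confirm this by exhibiting strictly positive barycentric coordinates of the origin: solving $\sum_i \lambda_i C_i = \mathbf{0}$ one coordinate at a time yields $\lambda_k = \lambda_0\, n/\big((n - k + 1)(n - k)\big) > 0$, which simultaneously shows that the origin lies in the interior and that $C_0, \dots, C_{n-1}$ are affinely independent, so that $Q_n$ is a genuine $(n-1)$-simplex.
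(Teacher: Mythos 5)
Your proof is correct and follows essentially the same route as the paper: both establish the stronger statement $Q_n^\vee = -Q_n$, and your Gram identity $\langle C_i, C_j\rangle = -1$ for $i\neq j$ is exactly the computation the paper performs when verifying its $\mathcal{H}$-representation in Proposition~\ref{hrepQn}, whose facet normals are precisely the vectors $-C_i$. The only difference is packaging: the paper passes through the $\mathcal{H}$-representation and cites a standard duality fact to read off $Q_n^\vee=\conv\{-C_0,\dots,-C_{n-1}\}$, while you identify the dual vertices directly from the Gram identity and, in addition, explicitly verify that the origin is an interior point, a hypothesis the paper uses without comment.
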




It behooves us to introduce an $\mathcal{H}$-representation for the simplex to compute its dual polytope. We now give such a representation. 

\begin{proposition}\label{hrepQn}
For $n \geq 2$, $Q_n$ has the $\mathcal{H}$-representation
	\[
	Q_n=\left. \begin{cases}
	 x\in \R^{n-1} \ : & kx_k-\sum_{i=1}^{k-1}x_i\leq 1 \ , \  \ 1\leq k \leq n-1\\
	   & -\sum_{i=1}^{n-1}x_i\leq 1\\
	\end{cases} \right\}.
	\]
\end{proposition}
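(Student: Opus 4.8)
The plan is to prove the claimed description by identifying the facet structure of the simplex $Q_n$ and matching each of the $n$ inequalities to a facet. Write $P$ for the polytope cut out by the inequalities, and label them $H_k : kx_k - \sum_{i=1}^{k-1}x_i \le 1$ for $1 \le k \le n-1$ together with $H_0 : -\sum_{i=1}^{n-1}x_i \le 1$. Since $Q_n$ is an $(n-1)$-dimensional simplex with exactly $n$ vertices, it has exactly $n$ facets, which matches the number of inequalities; so it suffices to verify that each $H_j$ is a facet-supporting halfspace of $Q_n$ and that $Q_n$ lies on the correct side of all of them.

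First I would translate the column description into explicit coordinates, taking care of the convention that the top row records $x_{n-1}$ and the bottom row $x_1$. This yields $C_0 = (1,1,\ldots,1)$ and, for $1 \le k \le n-1$, the vertex $C_k$ with coordinates $x_i = 1$ for $i < n-k$, $x_{n-k} = -(n-k)$, and $x_i = 0$ for $i > n-k$. A short determinant computation confirms that these points are affinely independent: the differences $C_k - C_0$ assemble, after reordering, into a triangular matrix whose diagonal entries $-(n-k)-1$ are all nonzero, so $Q_n$ is genuinely an $(n-1)$-simplex.

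Next I would substitute each vertex into each inequality. The computation shows that every $C_k$ satisfies all $n$ inequalities, giving $Q_n \subseteq P$; more precisely, it records exactly which inequalities are saturated. One finds that $C_0$ makes $H_1,\ldots,H_{n-1}$ equalities while keeping $H_0$ strict, and that for $k \ge 1$ the vertex $C_k$ makes $H_0$ and every $H_j$ with $j \ne n-k$ an equality while keeping $H_{n-k}$ strict. Consequently each hyperplane $\{H_j = 1\}$ passes through precisely $n-1$ of the vertices, hence carries a facet of $Q_n$, and these $n$ facets are distinct.

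Finally, since $Q_n$ is a full-dimensional simplex and I have exhibited $n$ distinct facet-defining halfspaces, each containing $Q_n$ and each with $Q_n$ on the side where $H_j \le 1$, the fundamental fact that a polytope equals the intersection of its facet-defining halfspaces forces $Q_n = \bigcap_{j} \{H_j \le 1\} = P$. I expect the only delicate point to be the bookkeeping in the first two steps: keeping the reversed coordinate convention straight and correctly tracking, for a given vertex, the single inequality that remains strict. Once this saturation pattern is pinned down, the remaining argument is purely structural.
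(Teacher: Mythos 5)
Your proposal is correct and follows essentially the same route as the paper: both verify that each vertex $C_j$ saturates exactly the $n-1$ inequalities other than the one indexed by $n-j$ and satisfies that remaining one strictly, then conclude that the $n$ halfspaces are precisely the facet-defining ones of the simplex. You simply make explicit some steps the paper leaves implicit (the affine independence of the vertices and the final appeal to a full-dimensional polytope being the intersection of its facet halfspaces).
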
 
\begin{proof}
It is sufficient to show that the vertices of $Q_n$ each satisfy precisely $n-1$ of the halfspace inequalities with equality and  satisfies the other inequalty strictly. Let  $f_k(\x)=kx_k-\sum_{i=1}^{k-1}x_i$,  and $f_{n}(\x)=-\sum_{i=1}^{n-1}x_i$. 
For a vertex $C_j$, we have that  $f_k(C_j)= 1$ for all $k\neq n-j$. 
This follows, because if $k<n-j$, we have $f_k(C_j)=(k)(1)-\sum_{i=1}^{k-1}1=1$, if $k>n-j$ with $k\neq n$, we have $f_k(C_j)=(n-j)-\sum_{i=1}^{n-j-1}1=1$, and if $k=n>n-j$, we have $f_n(C_j)= -(j-n)-\sum_{i=1}^{n-j-1}1=1$.
In the case of $k=n-j$, $f_{n-j}(C_j)=-(n-j)^2-(n-1-j)<1$ if $j\neq 0$ and $j\neq n-1$. For $j=0$ we have $f_n(C_0)=1-n<1$ and for $j=n-1$, we have $f_1(C_{n-1})=-1<1$. 
Thus, we have the correct $\mathcal{H}$-representation. 
\end{proof}
 
By \cite[Corollary 35.3]{ACCP},  and Proposition \ref{hrepQn}, 
it is clear that  $Q_n^\vee =-Q_n$.
Therefore, we have shown Theorem \ref{selfdual}.
 

\begin{remark}
We should note that $\Vol(Q_n)=n!$. For $n\geq 4$, it is immediate that these polytopes are different than previously known self dual reflexive simplices given in \cite{BNill,TsuchiyaDeltaVector}. 
\end{remark}	

Moreover, the self dual reflexive simplex of $Q_n$ has an interesting $\delta$-polynomial and a special triangulation.

\begin{theorem}\label{ehrhartseries}
	Let $n \geq 2$.\\
\textnormal{(i)} We have $\delta(Q_n,z)=A_n(z),$
where $A_n(z)$ is the Eulerian polynomial.\\
\textnormal{(ii)} $Q_n$ has a regular, flag, unimodular triangulation. 
	
\end{theorem}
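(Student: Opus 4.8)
The two parts are tightly linked: if $\mathcal{T}$ is any unimodular triangulation of $Q_n$, then its $h$-polynomial equals the $\delta$-polynomial, $\delta(Q_n,z)=h(\mathcal{T},z)$, a standard consequence of the fact that a unimodular triangulation refines the cone over $Q_n$ into unimodular subcones (so that summing the generating functions of the half-open cells gives $h(\mathcal{T},z)/(1-z)^{n}$). So the plan is to prove (ii) by producing a regular, flag, unimodular triangulation $\mathcal{T}$, and then to read off (i) either from a combinatorial evaluation of $h(\mathcal{T},z)$ or from an independent Ehrhart computation. I would carry out the Ehrhart computation for (i) and construct $\mathcal{T}$ for (ii), since the two reinforce each other through $\delta(Q_n,z)=h(\mathcal T,z)$.

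For part (i), the goal is the single clean identity
\[
i(Q_n,t)=(t+1)^n-t^n \qquad (t \in \Z_{\ge 0}).
\]
Granting this, the classical Eulerian generating function $\sum_{t\ge 0}(t+1)^n z^t = A_n(z)/(1-z)^{n+1}$ (with $A_n(z)=\sum_{\sigma\in\fS_n}z^{\des(\sigma)}$) together with its shift $\sum_{t\ge 0}t^n z^t = zA_n(z)/(1-z)^{n+1}$ gives
\[
\Ehr{Q_n}{z}=\sum_{t\ge 0}\bigl((t+1)^n-t^n\bigr)z^t=\frac{A_n(z)(1-z)}{(1-z)^{n+1}}=\frac{A_n(z)}{(1-z)^{n}},
\]
and comparing with $\Ehr{Q_n}{z}=\delta(Q_n,z)/(1-z)^{n}$ (here $\dim Q_n=n-1$) yields $\delta(Q_n,z)=A_n(z)$ at once. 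To establish the Ehrhart identity I would count $tQ_n\cap\Z^{n-1}$ directly from the $\mathcal{H}$-representation of Proposition \ref{hrepQn}: dilating replaces each right-hand side $1$ by $t$, and passing to the partial sums $s_k=x_1+\cdots+x_k$ (a unimodular change of coordinates with $s_0=0$) turns the facet inequalities into $ks_k\le (k+1)s_{k-1}+t$ for $1\le k\le n-1$ together with $s_{n-1}\ge -t$. Evaluating the number of integer solutions of this triangular system and showing it equals $(t+1)^n-t^n$ is the main computational step of part (i).

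For part (ii), the key structural input is reflexivity: by Theorem \ref{selfdual} the origin is the unique interior lattice point of $Q_n$ and every facet lies at lattice distance $1$ from it. Consequently, coning the origin over any unimodular triangulation of the boundary $\partial Q_n$ produces a unimodular triangulation $\mathcal{T}$ of $Q_n$: each maximal cell is $\{\mathbf 0\}\cup F$ for a boundary simplex $F$, and it is unimodular precisely because $F$ is and the origin sits at lattice height $1$ over $\mathrm{aff}(F)$. Since $\mathcal{T}$ is the simplicial cone (join with a point) over $\partial\mathcal T$, it is flag if and only if $\partial\mathcal T$ is, and one has $h(\mathcal T,z)=h(\partial\mathcal T,z)$; regularity follows by realizing $\mathcal T$ as a pulling triangulation with the origin pulled first. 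Thus everything reduces to triangulating each facet of $Q_n$ unimodularly, regularly and flagly, and compatibly along shared ridges. Equivalently, and perhaps more efficiently, one can phrase the whole construction algebraically: it suffices to exhibit a term order for which the toric ideal of the lattice points of $Q_n$ has a squarefree initial ideal generated in degree $2$, which by Sturmfels' dictionary simultaneously yields that $\mathcal T$ is regular, unimodular and flag.

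The main obstacle, common to both parts, is gaining combinatorial control of the $2^n-1$ lattice points of $Q_n$ and their mutual relations. For (i) this is the exact evaluation of the triangular integer count as $(t+1)^n-t^n$; for (ii) it is the verification that the facet triangulations assemble into a flag complex (equivalently, that the toric ideal admits a squarefree quadratic Gröbner basis), which is delicate because facets of $Q_n$ in dimension $\ge 3$ are not unimodular simplices and a priori need not carry unimodular---let alone flag---triangulations. I expect the cleanest resolution is to describe the $n!$ maximal cells of $\mathcal T$ explicitly, indexed by $\fS_n$, and to exhibit a shelling whose restriction statistic is the number of descents $\des(\sigma)$; this would identify $h(\mathcal T,z)=A_n(z)$ directly, recovering (i) from (ii) and mirroring the standard permutation-indexed triangulation of the cube $[0,1]^n$, whose $\delta$-polynomial is likewise $A_n(z)$.
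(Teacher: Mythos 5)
Your overall strategy is plausible, and your target identity $i(Q_n,t)=(t+1)^n-t^n$ is in fact correct and equivalent to $\delta(Q_n,z)=A_n(z)$ by exactly the Worpitzky-type generating-function manipulation you give. But as written the proposal has two genuine gaps, and they are precisely the two steps you yourself defer. For (i), you reduce to counting integer solutions of the triangular system $ks_k\le (k+1)s_{k-1}+t$, $s_{n-1}\ge -t$, and then assert without argument that the count is $(t+1)^n-t^n$; that evaluation is the entire content of part (i) and is not routine. For (ii) the gap is more serious: coning the origin over a triangulation of $\partial Q_n$ does correctly reduce the problem to the boundary (your remarks on unimodularity at lattice height one, flagness of a cone, and regularity via pulling are all sound), but you then only restate the remaining problem in two equivalent forms --- ``triangulate every facet regularly, flagly, unimodularly and compatibly'' or ``exhibit a term order giving a squarefree quadratic Gr\"obner basis'' --- without producing either. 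Since you concede that the facets need not a priori admit unimodular triangulations at all, an existence claim cannot rest on ``it suffices to exhibit a term order''; nothing in the proposal rules out that no such term order or boundary triangulation exists.

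The paper avoids both computations by a single structural observation: $\Pyr{Q_n}$ is unimodularly equivalent to the lecture hall polytope $R_n$ (concretely, $-f_{\mathcal{U}_n}(Q_n-\mathbf{1})$ is the simplex obtained from $R_n$ by deleting one vertex and one coordinate, and taking the pyramid restores $R_n$). Since passing to $\Pyr{\cP}$ changes neither the $\delta$-polynomial nor the existence of a regular, flag, unimodular triangulation, both assertions follow from known results on $R_n$: $\delta(R_n,z)=A_n(z)$ by Savage--Schuster, and the triangulation by Beck--Braun--K\"oppe--Savage--Zafeirakopoulos. Your substitution $s_k=x_1+\cdots+x_k$ is in effect transporting $Q_n$ into lecture-hall coordinates, and the $\fS_n$-indexed cell decomposition with restriction statistic $\des$ that you hope to exhibit is exactly the known combinatorics of $R_n$; so your plan can be completed, but only by proving (or citing) those lecture hall results rather than gesturing at them.
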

\begin{proof}
	For a lattice polytope $\cP \subset \R^d$, we set $\Pyr{\cP}=\conv(\cP\times\{0\}, (0,\cdots,0,1)) \subset \R^{d+1}$.
	Then it is well-known that $\delta(\Pyr{\cP},z)=\delta(\cP,z)$ (cf. \cite[Section 2.4]{BeckRobins})
	and $\cP$ has a regular, flag, unimodular triangulation if and only if $\Pyr{\cP}$ has a regular, flag, unimodular triangulation (cf. \cite[Section 4.2]{Triangulations}). 
	
	Let $R_n$ denote the $n$ dimensional simplex with $\mathcal{V}$-representation 
		\[
	R_n:=\conv \begin{bmatrix}
	0 & n & n & n & \cdots & n\\
	0 & 0 & n-1 & n-1 & \cdots & n-1\\
	0 & 0 & 0 & n-2 & \cdots & n-2\\
	\vdots & \vdots & \vdots & & \ddots & \vdots\\
	0 & 0 & 0 & \cdots & 0 & 1\\
	\end{bmatrix}
	\subset \R^n.
	\]
	This polytope $R_n$ is called a \emph{lecture hall polytope}.
Notice that $\Pyr{Q_n}$ is unimodularly equivalent to $R_n$. Let $\widetilde{R_n}$ be the polytope defined from $R_n$ by removing the $(n+1)$th column and $n$th row, let $\mathcal{U}_{n}$ denote the $(n-1)\times(n-1)$ upper triangular matrix defined by $(\mathcal{U}_{n})_{ij}=1$ if $i\leq j$ and $(\mathcal{U}_{n})_{ij}=0$ otherwise, let $\mathbf{1}$ denote the all ones vector, and let $\mathbf{0}$ denote the all zeros vector. Then we have   
	\[
	Q_n\cong -f_{\mathcal{U}_{n}}\left( Q_n-\mathbf{1}\right)=\widetilde{R_n}.
	\]
Hence it follows that 
	\[
	\Pyr{Q_n}\cong \operatorname{Pyr}\left({\widetilde{R_n}}\right)\cong 
	 R_n.
	\]
It is known that  for $n \geq 2$, $\delta(R_n,z)=A_n(z)$ (\cite{SavageSchuster})
and $R_n$ has a regular, flag, unimodular triangulation (\cite{BBKSZ}).
Therefore, the assertion follows.
\end{proof}

\end{document}